\newtheorem{theorem}{Theorem}
\newtheorem{corollary}[theorem]{Corollary}
\newtheorem{assumption}{Assumption}
\def\BibTeX{{\rm B\kern-.05em{\sc i\kern-.025em b}\kern-.08em
    T\kern-.1667em\lower.7ex\hbox{E}\kern-.125emX}}
\title{\LARGE \bf Cooling Under Convexity: An Inventory Control Perspective on Industrial Refrigeration}
\author{Vade Shah*, Yohan John*, Ethan Freifeld, Lily Y. Chen, and Jason R. Marden
\thanks{This paper was supported by the California Energy Commission, project \#EPIC-24-012.}
\thanks{V. Shah ({\tt\small vade@ucsb.edu}), Y. John, E. Freifeld, L. Y. Chen, and J. R. Marden are with the Center for Control, Dynamical Systems, and Computation at the University of California, Santa Barbara, CA. * denotes equal contribution.}%
}
\begin{document}

\maketitle 

\begin{abstract}
Industrial refrigeration systems have substantial energy needs, but optimizing their operation remains challenging due to the tension between minimizing energy costs and meeting strict cooling requirements. Load shifting—strategic overcooling in anticipation of future demands—offers substantial efficiency gains. This work seeks to rigorously quantify these potential savings through the derivation of optimal load shifting policies. Our first contribution establishes a novel connection between industrial refrigeration and inventory control problems with convex ordering costs, where the convexity arises from the relationship between energy consumption and cooling capacity. Leveraging this formulation, we derive three main theoretical results: (1) an optimal algorithm for deterministic demand scenarios, along with proof that optimal trajectories are non-increasing (a valuable structural insight for practical control); (2) performance bounds that quantify the value of load shifting as a function of cost convexity, demand variability, and temporal patterns; (3) a computationally tractable load shifting heuristic with provable near-optimal performance under uncertainty. Numerical simulations validate our theoretical findings, and a case study using real industrial refrigeration data demonstrates an opportunity for improved load shifting.
\end{abstract}

\section{Introduction}

Industrial refrigeration accounts for nearly 8.4\% of energy expenditure in the U.S.~\cite{EIA_MECS}, presenting significant opportunities for reducing costs and emissions through intelligent control. One particularly valuable strategy is \emph{load shifting}--adjusting the timing of energy consumption in anticipation of high-demand periods. Since refrigerated product serves as a thermal battery, it can be cooled well before future heat loads arrive and maintain low temperatures thereafter.

\begin{figure}
    \centering
    \includegraphics[width=\linewidth]{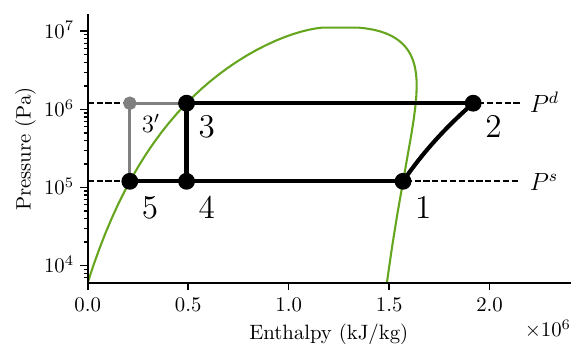}
    \caption{The pressure-enthalpy ($P$-$h$) diagram for ammonia, the most common industrial refrigerant. The saturation dome (green) separates phases of matter. In the vapor compression refrigeration cycle (solid black), heat is removed by cycling a refrigerant through the five points on the diagram in numerical order. The suction pressure $P^s$ and discharge pressure $P^d$ uniquely determine all five points. We consider a widely implemented configuration that includes a pressure vessel corresponding to point $4$ which separates the refrigerant into saturated liquid and vapor phases; our model also applies to the standard refrigeration cycle with liquid subcooling in the condenser to point $3'$ (gray). We assume that (i) the system is at thermodynamic steady-state, (ii) the compressor performs isentropic compression, (iii) the expansion valve performs isenthalpic expansion, (iv) the condenser/evaporator return precisely saturated liquid/vapor phases, and (v) pressure drops in the pipes and devices (other than the expansion valve) are neglected.
    % Note that assumption (iv) for the condenser is also satisfied in the presence of a high pressure receiver which ensures that the expansion valve receives saturated liquid.
    }
    \label{fig:P_h_cycle}
\end{figure}

Industrial refrigeration systems operate via the \emph{vapor-compression refrigeration cycle} shown in Figure~\ref{fig:P_h_cycle}. The cycle consists of four stages: (i) compression of low-pressure vapor to high-pressure vapor, (ii) condensation to high-pressure liquid, (iii) expansion to low-pressure liquid-vapor mixture, and (iv) evaporation back to low-pressure vapor. Two stages are critical for the control problem we address. In the \emph{evaporation} stage (5$\rightarrow$1 in Figure \ref{fig:P_h_cycle}), the refrigerant removes heat from the refrigerated space. In the \emph{compression} stage (1$\rightarrow$2), which consumes the most energy~\cite{Konda2024}, work is done on the refrigerant, enabling it to reject heat to the atmosphere. 

A critical control parameter in industrial refrigeration is \emph{suction pressure}~\cite{Jain2011, Stoecker1998}, which determines not only the rate of heat removal in the evaporation stage, but also the work done on the refrigerant in the compression stage. The relationship between suction pressure and system performance creates a fundamental tradeoff: lower suction pressure increases the cooling rate but requires more work, while higher suction pressure requires less work but may provide insufficient cooling to meet demand. Accordingly, much prior work has focused on dynamic suction pressure control with the goal of minimizing energy consumption while satisfying cooling requirements~\cite{Hovgaard2013, Larsen2006, Koeln2014, John2025}. 
% However, these approaches typically employ myopic or heuristic strategies that do not fully exploit the thermal storage capacity of refrigerated products. This work addresses this gap by formulating suction pressure control as a dynamic optimization problem that explicitly accounts for strategic load shifting opportunities.
These works have demonstrated effective suction pressure control strategies, but they focus primarily on practical implementation rather than rigorous performance guarantees. Thus, our work seeks to address this gap by developing a formal understanding of various suction pressure control techniques, and in particular, explicitly quantifying the performance gap between optimal strategies that perform load shifting and shortsighted, suboptimal ones that do not.

% While these works have demonstrated advanced and effective suction pressure control strategies, the value of optimality, and in particular, the value of load shifting, has not been as well-studied. This understanding is particularly useful for a practitioner deciding whether to upgrade their control system to a more advanced one capable of load shifting. Thus, the focus of our work is on characterizing and comparing the performance of simple suction pressure control strategies that cannot perform load shifting with optimal ones that can.

% Our work departs from the literature by characterizing the performance of simple control strategies with and without load shifting in deterministic and stochastic settings.

\subsection{Contributions}
% Jason's paragraph
% Our first result provides a novel lens for industrial refrigeration control by connecting it to inventory control, a foundational problem in operations research. The key insight is that the relationship between heat removal and compressor work exhibits convexity. This structural property enables us to reformulate dynamic suction pressure optimization as an inventory control problem: the thermal mass of refrigerated products corresponds to inventory, the heat removal rate serves as the control variable, and the convex work-heat relationship defines the ordering costs. Building on this reformulation, we develop three main theoretical contributions.

% Yohan's paragraph
Our first result provides a novel lens for industrial refrigeration control by connecting it to inventory control, a foundational problem in operations research. The thermal mass of refrigerated product corresponds to inventory, the heat removal rate serves as the control variable, and the work-heat relationship defines the ordering costs. The unique features are that the relationship between heat removal and compressor work exhibits convexity, holding costs are negligible, and backlogs cannot be tolerated. By incorporating these features into the framework of inventory control, we arrive at three novel theoretical contributions applicable to suction pressure control:

\begin{enumerate}[leftmargin=*]
\item First, we consider the deterministic setting where future demand is perfectly known. We provide an efficient algorithm and prove in Theorem~\ref{thm:deterministic} that it yields the optimal control policy. Importantly, we also establish a structural result: the optimal heat removal is non-increasing over time, which is a valuable insight for implementing load shifting in practice.  

\item Second, we analyze the stochastic setting with random demands. Theorem~\ref{thm:lb_myopic} establishes a lower bound on the cost savings achievable through load shifting compared to a myopic policy that only minimizes immediate costs. This bound is parameterized by the convexity of the cost function and the variability of incoming demands, revealing when load shifting offers the greatest value.

\item Third, recognizing that computing the optimal policy under uncertainty may be computationally prohibitive, we present a tractable load shifting heuristic inspired by our deterministic algorithm. Theorem~\ref{thm:ub_heuristic} provides an upper bound on the performance gap between this heuristic and the optimal policy, demonstrating its effectiveness as a practical alternative to dynamic programming.
\end{enumerate}

% Jason's paragraph
% Finally, we validate our theoretical findings through numerical simulations and demonstrate their practical impact using real industrial refrigeration data. Our case study shows that the proposed load shifting policies achieve cost reductions of up to 18\% compared to myopic control strategies.

% Yohan's paragraph
Finally, we validate our theoretical findings through numerical simulations and present a case study using real industrial refrigeration data that indicates opportunities for improvement in existing control implementations. 

\section{Refrigeration As Inventory Control}

% In this section, we present a first-principles model of the suction pressure control problem in industrial refrigeration, which motivates our study of constrained inventory control problems with convex ordering costs. We begin by introducing the central model of our refrigeration system, the vapor-compression refrigeration cycle.

In this section, we establish an important connection between suction pressure control in industrial refrigeration and classical inventory control. In dynamic suction pressure control, the goal is to minimize the total work required of the compressor over time while removing enough heat at the evaporator to maintain temperature requirements. Suction pressure optimization thus depends heavily on the relationship between heat removal and work. We study this relationship using the $P$-$h$ diagram.

Let $h_i$ denote the enthalpy at point $i \in \{1, 2, 3, 4, 5\}$ in the vapor-compression cycle shown in Figure~\ref{fig:P_h_cycle}. For a fixed\footnote{Discharge pressure is known to have a smaller impact on efficiency and is typically controlled in relation to ambient conditions \cite{Stoecker1998}; hence, we focus on suction pressure as our primary control input and consider any reasonable fixed discharge pressure throughout. Note that $G$ is convex for a wide range of discharge pressures (Figure~\ref{fig:power_heat_curves}).} discharge pressure $P^d$, the compressor work $\overline{W}(P^s) \triangleq \dot{m}_c (h_2 - h_1)$ is a function of suction pressure $P^s$ and refrigerant mass flow rate $\dot{m}_c$ through the compressor.
%Note that $h_2$ is a function of $P^s$ due to the isentropic compression assumption. 
\begin{comment}
The evaporator heat absorption $\overline{H}(P^s) \triangleq \dot{m}_e (h_1 - h_5)$ is a function of suction pressure $P^s$ and mass flow rate $\dot{m}_e$ through the evaporator. Under our assumptions, a steady-state energy balance yields a relationship between $\dot{m}_c,\dot{m}_e$; therefore, only one of the two quantities can be chosen independently. In this paper we focus on control of the thermodynamic degree of freedom $P^s$ and remain agnostic to the scale of a particular system (which determines the valid range of mass flow rates). This approach is reasonable, since it is well-known that compressors operate most efficiently at maximum mass flow rate~\cite{Stoecker1998}; therefore, it is desirable to first utilize the thermodynamic degrees of freedom before resorting to varying mass flow rate. Hence, in what follows, we focus exclusively on these thermodynamic effects and consider the normalized quantities \emph{specific work} 
\begin{equation*}
W(P^s) \triangleq h_2 - h_1
\end{equation*}
and \emph{specific heat absorption} 
\begin{equation*}
    H(P^s) \triangleq h_1 - h_5.
\end{equation*}
\end{comment}
The evaporator heat absorption $\overline{H}(P^s) \triangleq \dot{m}_e (h_1 - h_5)$ is a function of suction pressure $P^s$ and mass flow rate $\dot{m}_e$ through the evaporator. Under our assumptions, a steady-state energy balance yields a relationship between $\dot{m}_c,\dot{m}_e$; therefore, only one of the two quantities can be chosen independently. It is well-known that compressors operate most efficiently at maximum mass flow rate~\cite{Stoecker1998}, so it is desirable to first utilize the thermodynamic degrees of freedom before resorting to varying mass flow rate. Hence, in what follows, we remain agnostic to system scale, i.e., we assume that the compressor operates at its maximum mass flow rate (which, in turn, determines the evaporator mass flow rate), and focus exclusively on thermodynamic effects\footnote{Note that the convexity highlighted in Figure \ref{fig:power_heat_curves} is preserved with a fixed mass flow rate, so this does not affect any of our conclusions.}. Specifically, we consider the normalized quantities \emph{specific work} 
\begin{equation*}
W(P^s) \triangleq h_2 - h_1
\end{equation*}
and \emph{specific heat absorption} 
\begin{equation*}
    H(P^s) \triangleq h_1 - h_5.
\end{equation*}

We are now equipped to define the optimization problem of interest in this paper. Suppose that in each period $k \in \{0, \dots, N - 1\}$, $w_k$ units of heat load arrive at the refrigerated space. The evaporator must meet that heat load either in the timestep it arrives or by precooling the space ahead of time, or load shifting, by lowering the suction pressure further than needed. The goal of the \emph{dynamic suction pressure optimization} problem is to ensure that the evaporator can satisfy the heat load requirements while minimizing the cumulative work performed by the compressor:
\begin{equation}\label{eq:SPO_nominal}
\begin{aligned}
    \min_{\bm{P}^s} \quad & \sum_{k=0}^{N-1} W(P^s_k) \\
    \textrm{s.t.} \quad & \sum_{j = 0}^k H(P^s_k) \geq \sum_{j = 0}^k w_j, \quad k \in \{0, \dots, N - 1\}.
\end{aligned}
\end{equation}
Here, the constraints imply that at each period, we must set suction pressure so as to remove \emph{at least} the cumulative heat load that has arrived so far. 

We will now rewrite this optimization problem in a more convenient form. Consider the following change of variables:
\begin{align*}
    u_k &\triangleq H(P_k^s) ,\\
    x_0 &= 0, \\
    x_{k + 1} &= x_k + u_k - w_k, \quad k \in \{0, \dots, N - 1 \}.
\end{align*}
Here, $u_k$ represents the heat removed at time $k$, and $x_k$ represents the `precooling buffer' available in the system at time $k$. Using these variables, we can rewrite \eqref{eq:SPO_nominal} as
\begin{equation}\label{eq:SPO}
\begin{aligned}
    \min_{\bm{u}} \quad & \sum_{k=0}^{N-1} G(u_k) \\
    \textrm{s.t.} \quad & x_{k + 1} = x_k + u_k - w_k, \quad k \in \{0, \dots, N - 1\}, \\
    & x_0 = 0, \\
    & x_k \geq 0, \quad k \in \{1, \dots, N\}.
\end{aligned}
\end{equation}
where
\begin{equation}
    G(u) \triangleq W  \circ H^{-1} (u).
\end{equation}
directly maps heat removal to compressor work. The functions $W,H,G$ can be calculated using the thermodynamic equations of state implemented in~\cite{CoolProp}; here, note that $H$ is strictly decreasing in $P^s$, so it is invertible.

The formulation~\eqref{eq:SPO} is convenient because it resembles an \emph{inventory control} problem. In inventory control, one seeks to manage the inventory level of a product so as to meet demand while minimizing the costs of ordering additional product, holding excess inventory, and experiencing backlogs by having insufficient inventory. 
The analogy is that the precooling buffer $x_k$, heat absorption $u_k$, and heat load $w_k$ can be thought of as inventory, orders, and demands, respectively. However, \eqref{eq:SPO} differs from typical inventory control in two major ways. First, whereas inventory control problems usually have costs for holding excess product and for carrying insufficient product,
our model imposes no penalty for maintaining a precooling buffer, i.e., zero holding costs, and instead \emph{requires} that the precooling buffer is always nonnegative, i.e., infinite backlog costs. 
% Second, in most inventory control models, there is a linear per-unit cost for ordering additional product. However, 
Second, in our refrigeration model, the cost of ordering is $G(u)$. Figure~\ref{fig:power_heat_curves} shows a plot of $G$ for ammonia, the most common industrial refrigerant\footnote{Similar relationships hold for other common refrigerants including R134a and carbon dioxide.}. It turns out that for a wide range of discharge pressures, this ordering cost function is convex.

The dynamic suction pressure optimization problem can thus be viewed as an inventory control problem with zero holding costs, infinite backlog costs, and convex ordering costs. Henceforth, we study this problem using the framework and terminology of inventory control, and draw connections back to refrigeration when appropriate.

\begin{figure}
    \includegraphics[width=\linewidth]{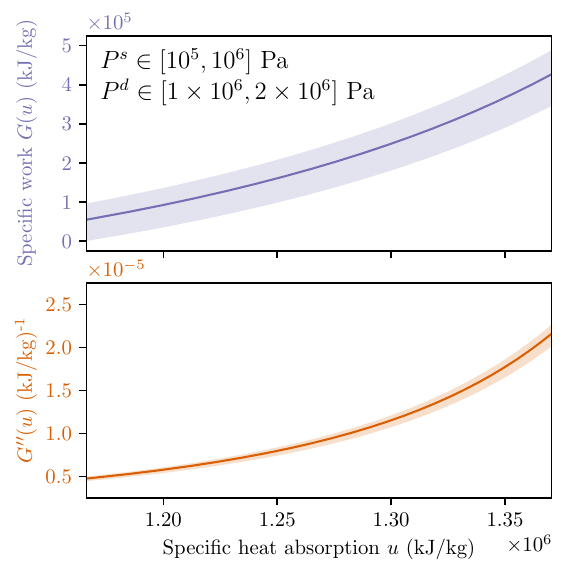}
    \caption{Top: The function $G(u)$, which relates specific heat absorption to specific work for ammonia. Bottom: The second derivative $G''(u)$; observe that it is strictly positive. The solid line in each plot corresponds to $P^d = 1.5 \times 10^6$ Pa; the shaded surrounding regions illustrate that a similar convex relationship holds for a wide range of practical discharge pressures.}
    \label{fig:power_heat_curves}
\end{figure}

\section{Inventory System Model}

% We focus on inventory systems with the following salient features:
% \begin{itemize}
%     \item holding costs are neglected,
%     \item ordering costs are convex, and
%     \item the inventory level must always be nonnegative.
% \end{itemize}
% Below, we formally present our model for studying these kinds of inventory systems.

In this section we present our model of an inventory system motivated by suction pressure control in industrial refrigeration. First, we assume that the ordering cost function satisfies the following:
\begin{assumption}\label{assumption:convex_cost}
    $G: \mathbb{R}_{\geq 0} \to \mathbb{R}_{\geq 0}$ is a strictly convex, increasing function with $G(0) = 0$. 
\end{assumption}

\noindent Additionally, we assume that the demand satisfies the following:
\begin{assumption}\label{assumption:positive_demand}
    The demand $w_k$ is sampled from a distribution $W_k$ with mean $\mu_k$, variance $\sigma_k^2$, and support $[\mu_k - \Delta, \mu_k + \Delta]$, where $\Delta \geq 0$ and $\mu_k - \Delta > 0$ for all $k \in \{0, \dots, N - 1\}$.
\end{assumption}
\noindent Hence, akin to heat loads in refrigeration, the demand in each period causes the inventory level in the system to either remain constant or decrease. 
% To ensure that the state constraint is met in each period, additional units of inventory must be ordered. However, ordering $u$ units of inventory incurs a cost $G(u)$. We assume that the cost function $G$ satisfies the following:

An inventory system is completely described by the tuple $P = (G, \bm{\mu}, \bm{\Sigma}, \Delta)$, where $\bm{\mu}$ is an $N$-dimensional vector of means and $\bm{\Sigma}$ is a $N \times N$ covariance matrix. We assume that in each period, the order amount is chosen according to a policy $\bm{\pi} = \{ \pi_0, \dots, \pi_{N-1} \}$,
a sequence of functions $\pi_k : \mathbb{R}_{\geq 0} \to \mathbb{R}_{\geq 0}$ that map the current state $x_k$ to an order amount $u_k$. The expected cost of a policy $\bm{\pi}$ from an initial condition $x_0$ for a given problem $P$ is
\begin{equation}\label{eq:policy_cost}
    J_{\bm{\pi}}(x_0 \, | \, P) \triangleq \sum_{k = 0}^{N - 1} \mathbb{E} \left[G( \pi_k(x_k)) \right].
\end{equation}

The optimal policy $\bm{\pi}^*$ is the one that minimizes the expected costs \eqref{eq:policy_cost} subject to the constraint $x_k \geq 0$ for all $k$.
\begin{comment}
    It can be identified as the unique solution to the equations
    \begin{align*}
        J_T(x_T) &= 0 \\
        J_t(x_k) &= \min_{u_k \in \mathcal{U}_t} \; G(u_k) + \mathbb{E}_{w_k} [J_{t + 1}(x_k + u_k - w_k)]
    \end{align*}
    for all $t \in \{0, \dots, T - 1\}$, where 
    \begin{equation}
        \mathcal{Y}_t = \{u_k \, | \, u_k \geq \max \{0, \mu_k + \Delta - x_k \}.
    \end{equation}
\end{comment}
Importantly, an optimal policy in this setting may perform \emph{load shifting}.
% , meaning that it may order additional units of inventory in anticipation of future demand.
In contrast, a naive policy that performs no load shifting, commonly referred to as the \emph{myopic} policy $\bm{\pi}^{\textup{my}}$, is defined as follows:
\begin{equation}
    \bm{\pi}_k^{\textup{my}} \triangleq \max\{0, \mu_k + \Delta - x_k\}.
\end{equation}
The myopic policy orders the minimum amount necessary to ensure a nonnegative inventory level; in general, this strategy need not be optimal. The focus of this paper is on understanding the performance gap between the optimal and myopic policies in inventory systems with convex ordering cost functions to understand the value of load shifting in industrial refrigeration. We begin by studying this question in the deterministic setting.

\section{Deterministic Inventory Systems}\label{sect:deterministic}

When the demands that enter the system are known a priori (i.e., $\Delta = 0$), the control problem is simply~\eqref{eq:SPO}, an open-loop convex optimization problem. Below, we review the relevant literature before presenting our main result.

\begin{comment}
The foundational work on separable convex optimization with linear ascending constraints is~\cite{Morton1985}. The authors consider the special case of $\sum_t \lambda_t x_k^p$ as the objective function where $p > 1$ and provide an algorithm for solving such problems. In~\cite{Hochbaum1990}, the authors prove the polynomiality of separable convex optimization problems with linear constraint matrices whose subdeterminants are small, a condition met by the ascending constraints. The most relevant work is~\cite{Padakandla2010} which provides an algorithm applicable to our setting and important monotonicity and convexity results for the optimal value. The algorithm we propose is simplified by specializing for our setting of identical objective functions in each time step. A more recent work presents an algorithm that accommodates additional complexity including lower and upper bounds on partial sums as well as not requiring strict convexity or differentiability of the objective function~\cite{Vidal2019}. Relevant applications include resource allocation problems in signal processing~\cite{DAmico2014} and communication systems~\cite{Viswanath2002}. We focus on suction pressure control in industrial refrigeration.     
\end{comment}

\subsection{Literature Review}

The problem \eqref{eq:SPO} is a convex optimization problem with an additively separable objective function and so-called linear ascending constraints. Though our motivation stems from suction pressure optimization in industrial refrigeration, similar problems arise in signal processing~\cite{DAmico2014} and communication systems~\cite{Viswanath2002}.
Early work on separable convex optimization demonstrates that problems with linear constraint matrices whose subdeterminants are small (a condition met by linear ascending constraints) can be solved in polynomial time, and in the specific setting of polynomial objective functions with linear ascending constraints, can be solved by a greedy algorithm~\cite{Morton1985, Hochbaum1990}. Closely related to our work are more recent studies~\cite{Padakandla2010, Vidal2019} which provide algorithms for solving these kinds of problems and characterize important monotonicity and convexity properties of the optimal value. Our work diverges from the existing literature in that we consider the specialized setting where the separable objective function is the sum of \emph{identical} convex functions. Although this setting is more restrictive, it allows us to identify important structural and monotonicity properties of the optimal solution.

\subsection{Optimal Load Shifting}

When demands are known perfectly a priori, our first result establishes that Algorithm~\ref{alg:load_shift} is optimal for solving~\eqref{eq:SPO}:
\begin{theorem}\label{thm:deterministic}
    For any inventory system $P = (G, \bm{\mu}, \bm{\Sigma}, \Delta)$ that satisfies Assumptions \ref{assumption:convex_cost} and \ref{assumption:positive_demand} with $\Delta = 0$, $\bm{u}^* = \text{\textsc{LoadShift}}(\bm{\mu}, 0, 0)$ is the optimal solution to \eqref{eq:SPO}. Furthermore, the optimal trajectory is non-increasing, i.e.,
    \begin{equation}
        u^*_{k+1} \leq u^*_k, \quad \forall k \in \{0,\dots,N-2\}.
    \end{equation}
\end{theorem}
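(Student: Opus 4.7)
The plan is to characterize the unique optimum of \eqref{eq:SPO} via its KKT conditions, which yields the monotonicity claim immediately and reduces optimality of \textsc{LoadShift} to verifying that its output is a KKT point. Since Assumption~\ref{assumption:convex_cost} makes the objective strictly convex on a polyhedral feasible set, the optimizer $\bm{u}^*$ is unique and the KKT conditions are both necessary and sufficient.

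First I would derive the KKT stationarity condition. Attaching multiplier $\lambda_k \geq 0$ to the $k$th cumulative constraint, stationarity at any $u_k^* > 0$ reads $G'(u_k^*) = \sum_{j=k}^{N-1} \lambda_j$; positivity of $u_k^*$ holds throughout since Assumption~\ref{assumption:positive_demand} with $\Delta = 0$ forces $w_k > 0$ and the state constraint $x_k \geq 0$ eventually compels strictly positive orders. The right-hand side is nonincreasing in $k$ because each $\lambda_j \geq 0$, and $G'$ is strictly increasing by strict convexity, so $u_k^* \geq u_{k+1}^*$. This proves the non-increasing trajectory claim as a direct corollary of optimality.

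Next I would verify that \textsc{LoadShift} produces the unique KKT point. Based on the call pattern $\textsc{LoadShift}(\bm{\mu}, 0, 0)$ and the problem structure, I infer that the algorithm recursively partitions the horizon into blocks $[s_1,e_1],\dots,[s_m,e_m]$ by choosing $e_i$ as the largest index maximizing the running average $(k-s_i+1)^{-1}\sum_{j=s_i}^k w_j$, setting $u_k$ equal to this block average $\bar u_i$ on $k \in [s_i,e_i]$, and recursing with initial state $x_{e_i+1}=0$ (which is exact by the averaging construction). Optimality then follows by constructing multipliers $\lambda_{e_i} \triangleq G'(\bar u_i) - G'(\bar u_{i+1})$, with $G'(\bar u_{m+1}) \triangleq 0$, and $\lambda_j = 0$ otherwise; stationarity holds by a telescoping sum across block endpoints, and complementary slackness holds because each block endpoint has $x_{e_i+1}=0$ while all other multipliers vanish.

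The principal obstacle is the pool-adjacent-violators-style lemma establishing strict monotonicity $\bar u_1 > \bar u_2 > \cdots > \bar u_m$, which is required for nonnegativity of the constructed multipliers. If instead $\bar u_{i+1} \geq \bar u_i$, then the concatenated prefix ending at $e_{i+1}$ would have average at least $\bar u_i$, contradicting the maximality (with largest-index tie-breaking) of $e_i$ chosen in the previous recursive step. Feasibility within each block follows from the same maximality, which prevents any intermediate prefix average from exceeding $\bar u_i$ and thus precludes intermediate constraint violations. Once this structural lemma is in hand, the KKT verification is routine and uniqueness of the strictly convex program's optimum completes the proof.
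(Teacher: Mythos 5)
Your proposal is correct, but it takes a genuinely different route from the paper. You certify optimality by exhibiting an explicit dual certificate: you derive the KKT stationarity condition $G'(u_k^*) = \sum_{j \geq k} \lambda_j$, construct multipliers $\lambda_{e_i} = G'(\bar u_i) - G'(\bar u_{i+1})$ supported on block endpoints, and reduce everything to the pool-adjacent-violators lemma that consecutive block averages strictly decrease (which you correctly deduce from the largest-index maximality in the block selection). The paper instead argues purely in the primal: it shows by an exchange argument (perturbing $\tilde u_{k'}$ down and $\tilde u_{k''}$ up, exploiting strict convexity) that any optimal solution must be tight at the block boundaries, decomposes the problem into independent subproblems over each block, and solves each by Jensen-type equalization; monotonicity is then a separate two-line averaging argument. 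Your approach buys a unified treatment -- monotonicity falls out of the nonnegativity and telescoping structure of the multipliers rather than needing its own argument -- and the explicit duals would support sensitivity analysis; its cost is that it formally requires $G'$ to exist (Assumption~\ref{assumption:convex_cost} asserts only strict convexity, so strictly you should work with subgradients or note that the exchange argument avoids this), and your assertion that every $u_k^*>0$ "eventually" follows from the state constraint is slightly hand-wavy, though harmless: the \textsc{LoadShift} output has all entries equal to averages of strictly positive demands, and uniqueness of the strictly convex program's optimum closes the loop. Both proofs ultimately rest on the same structural fact about running-average maximizers, so I consider the proposal a valid alternative proof.
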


\begin{algorithm}
\caption{Load shifting algorithm.}\label{alg:load_shift}
\begin{algorithmic}
\Procedure{LoadShift($\bm{w}$, $\Delta$, $x$)}{}
\State $k \gets 0$, $w_0 \gets w_0 + \Delta$
\While{$k \leq N - 1$}
\State $\mathcal{S} = \arg \max_{j \in \{1, \dots, N - k + 1\}}((\sum_{i=k}^{k+j-1} w_i) - x)/j$
\State $j \gets k + \max \mathcal{S}$
\For{$i \in \{k, \dots, j - 1\}$}
\State $u_i \gets \frac{1}{j - k} \sum_{i=k}^{j-1} w_i$
\EndFor
\State $k \gets j$
\EndWhile
\State \Return $\bm{u}$
\EndProcedure
\end{algorithmic}
\end{algorithm}

% With the non-increasing nature of $\mathbf{y}^*$ established, we briefly describe Algorithm~\ref{alg:load_shift} and characterize its output before proving optimality. Algorithm~\ref{alg:load_shift} consists of starting from the beginning of the horizon $t \gets 1$ and looking forward for the time step $k^*$ with the largest cumulative mean heat load. Note that in the event of multiple time steps attaining the maximum, the last maximizing time step may be chosen so as to minimize the number of iterations of the loop. Then the cumulative mean heat load up to time step $k^*$ is divided evenly among the time steps between $t$ and $k^*$. Finally, the algorithm moves forward $t \gets k^* + 1$ and repeats. The resulting heat absorption trajectory $\mathbf{y}$ has one or more `batches' of constant value, i.e., $\mathbf{y} = [a, \dots, a, b, \dots, b,\dots]$ where $a > b > \dots$. Each batch corresponds to one iteration of Algorithm~\ref{alg:load_shift}, and the constraint in~\eqref{eq:SPO} corresponding to the final time step of each batch is active.

\begin{comment}

\end{comment}

\begin{proof}
    First, we remark that Algorithm~\ref{alg:load_shift} terminates in at most $N$ iterations because in each iteration, $k$ increases by at least 1. Next, we establish that the algorithm returns a feasible solution. Suppose that the algorithm iterates through the while loop $n \leq N$ times. Let $k_0, \dots, k_{n}$ denote the values that $k$ takes over the course of the algorithm (here, $k_0 = 0$ and $k_{n} = N$). Observe that
    \begin{equation}\label{eq:alg_tight}
        \sum_{j = k_0}^{k_1 - 1} u_j = \sum_{j = k_0}^{k_1 - 1} \frac{ \sum_{i=k_0}^{k_1-1} w_i}{k_1 - k_0 } = \sum_{j=k_0}^{k_1-1} w_j,
    \end{equation}
    meaning that the constraint corresponding to $k_1 - 1$ is met with equality. This implies that for all $k > k_1 - 1$, the constraints can be rewritten as
    \begin{equation}\label{eq:alg_reindex_constraints}
        \sum_{j = k_0}^{k} u_j \geq \sum_{j=k_0}^{k} w_j
        \iff
        \sum_{j = k_1}^{k} u_j \geq \sum_{j=k_1}^{k} w_j.
    \end{equation}
    From \eqref{eq:alg_tight}, it is readily verified that the constraints corresponding to $k_2 - 1, \dots, k_{n} - 1$ are also met with equality. Consider the constraint corresponding to some $k$ for which $k_l \leq k < k_{l + 1} - 1$ for some $l \in \{1, \dots, n \}$. We have that
    \begin{equation*}
        \sum_{j = k_l}^{k} u_j = \sum_{j = k_l}^{k} \frac{ \sum_{i=k_l}^{k_{l + 1}-1} w_i}{k_{l + 1} - k_l } \geq \sum_{j = k_l}^{k} \frac{ \sum_{i=k_l}^{k} w_i}{k - k_l + 1} = \sum_{i=k_l}^{k} w_i,
    \end{equation*}
    where the inequality follows from the fact that $k_{l + 1} \in \arg \max_{j \in \{1, \dots, N - k + 1\}} \frac{1}{j} \sum_{i=k_l}^{k_l+j-1} w_i$ by construction in Algorithm~\ref{alg:load_shift}. Thus, we have that every constraint is satisfied.
    
    Last, we establish optimality. We first remark that an optimal solution must meet the final constraint with equality; from \eqref{eq:alg_tight}, this is true of the solution returned by the algorithm. We claim that an optimal solution must also meet the constraint corresponding to $k_1 - 1$ with equality. To see this, suppose that $\tilde{\bm{u}}$ is an optimal solution for which $\sum_{j = k_0}^{N} \tilde{u}_j = \sum_{j = k_0}^{N} w_j$ and $\sum_{j = k_0}^{k_1 - 1} \tilde{u}_j > \sum_{j = k_0}^{k_1 - 1} w_j$. These two facts imply
    \begin{equation*}
        \sum_{j = k_1}^{N - 1} \tilde{u}_j
        < \sum_{j = k_1}^{N - 1} w_j \quad \text{and} \quad \tilde{u}_{k'} > \frac{1}{k_1 - k_0} \sum_{j=k_0}^{k_1 - 1} w_j
    \end{equation*}
    for some $k' \in \{k_0, \dots, k_1 - 1 \}$. Since $k_1$ maximizes the cumulative mean,
    \begin{equation*}
        \frac{\sum_{j = k_0}^{N - 1} w_j}{N} \leq \frac{\sum_{j = k_0}^{k_1 - 1} w_j}{k_1 - k_0} \implies \frac{\sum_{j = k_1}^{N} w_j}{N - k_1 + 1} \leq \frac{\sum_{j = k_0}^{k_1 - 1} w_j}{k_1 - k_0}, 
    \end{equation*}
    implying that for some $k'' \in \{ k_1, \dots, N - 1 \}$,
    \begin{equation*}
        \tilde{u}_{k''} \leq \frac{\sum_{j = k_1}^{N} w_j}{N - k_1 + 1} \leq \frac{\sum_{j=k_0}^{k_1 - 1} w_j}{k_1 - k_0} < \tilde{u}_{k'}.
    \end{equation*}
    Since $G$ is strictly convex, there exists some positive $\varepsilon < \sum_{j = k_0}^{k_1 - 1} \tilde{u}_j - \tilde{u}_{k'}$ for which 
    \begin{equation*}
        G(\tilde{u}_{k'} - \varepsilon) + G(\tilde{u}_{k''} + \varepsilon) < G(\tilde{u}_{k'}) + G(\tilde{u}_{k''}). 
    \end{equation*}
    This implies the existence of another feasible solution with strictly lower cost, so $\tilde{\bm{u}}$ cannot be optimal. This contradiction establishes the claim.

    Using \eqref{eq:alg_reindex_constraints}, this reasoning can be extended to show that any optimal solution must meet the constraints corresponding to $k_2 - 1, \dots, k_{n} - 1$ with equality. However, if the constraint corresponding to $k_l - 1$ must be satisfied with equality, then $u_{k_0}, \dots, u_{k_l - 1}$ do not appear in any subsequent constraints; thus, the optimization problem can be decomposed into $n$ independent problems of the form
    \begin{equation*}
    \begin{aligned}
        \min_{\bm{u}} \quad & \sum_{k=k_l}^{k_{l+1} - 1} G(u_k) \\
        \textrm{s.t.} \quad & \sum_{j=k_l}^k u_j \geq \sum_{j=k_l}^k w_j, \quad \forall k \in \{k_l,\dots,k_{l+1} - 1\},
    \end{aligned}
    \end{equation*}
    for all $l \in \{ 0, \dots, n - 1 \}$. Since $G$ is convex, the optimal solution to each of these individual problems is the one that equates all of the decision variables while meeting the constraint strictly, i.e., $u_k = \frac{1}{k_{l+1} - k_l} \sum_{j=k_l}^{k_{l+1}-1} w_j$ , $\forall k \in \{k_l, \dots, k_{l+1} - 1\}$. This is precisely the solution returned by the algorithm, completing the proof.

    Last, we establish the monotonicity result. We proceed by contradiction. Let $\hat{\bm{u}}$ be an optimal solution to~\eqref{eq:SPO} for which $\hat{u}_{k'+1} > \hat{u}_{k'}$ for some $k' \in \{0,\dots,N-2\}$. Consider an alternative solution $\overline{\bm{u}}$ that is identical to $\hat{\bm{u}}$ except that $\overline{u}_{k'} = \overline{u}_{k'+1} = ( \hat{u}_{k'} + \hat{u}_{k'+1} ) / 2.$ The alternative solution $\overline{\bm{u}}$ is still feasible because $\overline{u}_{k'} > \hat{u}_{k'}$ and $\overline{u}_{k'} + \overline{u}_{k'+1} = \hat{u}_{k'} + \hat{u}_{k'+1}$. However, since $G$ is strictly convex, we have that
    \begin{equation*}
        G(\overline{u}_{k'}) + G(\overline{u}_{k'+1}) < G(\hat{u}_{k'}) + G(\hat{u}_{k'+1})
    \end{equation*}
    which is a contradiction.
\end{proof}

Algorithm~\ref{alg:load_shift} works by greedily identifying windows with high demand and equalizing order amounts within these windows to exploit convexity. In practice, this means that optimal trajectories typically remain flat for long periods of time and steadily decrease. While this algorithm is only provably optimal in the deterministic setting, it will still be valuable in the stochastic setting, as shown next.

\section{Stochastic Inventory Systems}\label{sect:stochastic}

In most inventory systems, including industrial refrigeration, one only has prior information regarding the distribution from which the demands are generated, making the deterministic solution impractical for direct implementation. In this section, we study and compare various strategies for inventory management in the face of uncertainty using the framework of inventory control.

\subsection{Literature Review} 

Optimal policies for inventory systems are well-known to be structurally simple in the case of linear ordering and convex holding/backlog cost functions~\cite{arrow1951optimal, scarf1960optimality}. However, for nonlinear ordering cost functions, (such as the convex function studied in this work), optimal policies typically have limited structure~\cite{perera2023survey, benjaafar2018optimal, veinott1964production}, and the special case of zero holding costs and infinite backlog costs considered in this work has received limited attention in the literature. Given that our focus is on the value of load shifting, and that characterizing optimal policies is both analytically and computationally challenging, the goal of this section is to assess the performance of simple tractable policies.

\subsection{Myopic Suboptimality}

We begin our study of stochastic inventory systems by analyzing the performance of the myopic policy. Recall that the myopic policy is the one that minimizes immediate costs without taking future demands into consideration. Our first result characterizes the suboptimality of the myopic policy.

\begin{theorem}\label{thm:lb_myopic}
    For any inventory system $P = (G, \bm{\mu}, \bm{\Sigma}, \Delta)$ that satisfies Assumptions \ref{assumption:convex_cost} and \ref{assumption:positive_demand} with $l \leq G''(x) \leq L$, the difference in expected costs between the myopic policy and the optimal policy is at least
    
    \vspace{0.25cm}
    
    \noindent \resizebox{\linewidth}{!}
    {$
        J_{\bm{\pi}^{\textup{my}}}(0) - J_{\bm{\pi}^*}(0) \geq \frac{l}{2}  \left( S_{\mu} + \sum_{k = 0}^{N - 2} \sigma_k^2 \right) - \frac{L}{2}  \left( S_{u} + \sum_{k = 0}^{N - 2} \sigma_k^2 \right) 
    $}

    \vspace{0.25cm}
    
    \noindent where
    
    \vspace{-0.3cm}
    
    \begin{gather*}
        M = \frac{\Delta + \sum_{k = 0}^{N - 1} \mu_k}{N}, \qquad \bm{u} = \text{\textsc{LoadShift}}(\bm{\mu},\Delta,0),
        \\
        S_{u} = \sum_{k = 0}^{N - 1} \left( u_k - M \right)^2, \\
        S_{\mu} = (\mu_0 + \Delta - M)^2 + \sum_{k = 1}^{N - 1} \left( \mu_k - M \right)^2.
    \end{gather*}
\end{theorem}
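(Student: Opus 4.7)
My approach uses the two-sided second-order bound
\[
G(\mathbb{E}[X]) + \tfrac{l}{2}\mathrm{Var}(X) \;\leq\; \mathbb{E}[G(X)] \;\leq\; G(\mathbb{E}[X]) + \tfrac{L}{2}\mathrm{Var}(X),
\]
which follows by Taylor-expanding $G$ around $\mathbb{E}[X]$ and using $l \leq G'' \leq L$. The lower half will produce a lower bound on $J_{\bm{\pi}^{\textup{my}}}(0)$ and the upper half an upper bound on $J_{\bm{\pi}^*}(0)$ via an explicit feasible policy; subtracting the two yields the theorem, with a shared $NG(M)$ term canceling.

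For the myopic lower bound, I would first unroll the recursion. Since $x_0 = 0$, the initial order is $\pi_0^{\textup{my}} = \mu_0 + \Delta$ deterministically, and a one-step induction shows $x_k = \mu_{k-1} + \Delta - w_{k-1}$ and $\pi_k^{\textup{my}} = \mu_k - \mu_{k-1} + w_{k-1}$ for $k \geq 1$ (the zero truncation is never binding since $\mu_k > \Delta$). Hence $\mathbb{E}[\pi_k^{\textup{my}}] = \mu_k$ and $\mathrm{Var}(\pi_k^{\textup{my}}) = \sigma_{k-1}^2$. Applying the strong-convexity half of the sandwich per period yields
\[
J_{\bm{\pi}^{\textup{my}}}(0) \;\geq\; G(\mu_0 + \Delta) + \sum_{k = 1}^{N - 1} G(\mu_k) + \tfrac{l}{2}\sum_{k = 0}^{N - 2}\sigma_k^2,
\]
and applying the same inequality once more to the uniform distribution on the sequence $(\mu_0+\Delta, \mu_1, \ldots, \mu_{N-1})$, whose empirical mean is $M$, tightens this to $J_{\bm{\pi}^{\textup{my}}}(0) \geq N G(M) + \tfrac{l}{2}S_\mu + \tfrac{l}{2}\sum_{k=0}^{N-2}\sigma_k^2$.

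For the optimal-cost upper bound, I would exhibit a feasible Markovian policy $\bm{\pi}$ and invoke $J_{\bm{\pi}^*}(0) \leq J_{\bm{\pi}}(0)$. With $\bm{u} = \text{\textsc{LoadShift}}(\bm{\mu}, \Delta, 0)$, the candidate is $\pi_0 = u_0$ and $\pi_k(x_k) = u_k + w_{k-1} - \mu_{k-1}$ for $k \geq 1$; the right-hand side can be written as a function of $x_k$ alone by inverting the dynamics for $w_{k-1}$. Under this policy $\mathbb{E}[\pi_k] = u_k$ and $\mathrm{Var}(\pi_k) = \sigma_{k-1}^2$, so the smoothness half of the sandwich applied per period, and then once more to the deterministic sequence $\bm{u}$ (whose empirical mean is $M$ since $\sum_k u_k = \Delta + \sum_k \mu_k = NM$), yields $J_{\bm{\pi}}(0) \leq N G(M) + \tfrac{L}{2}S_u + \tfrac{L}{2}\sum_{k=0}^{N-2}\sigma_k^2$. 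Subtracting the two bounds gives the stated inequality.

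The main obstacle is verifying that this candidate policy is feasible on every sample path. For the state constraint, I would use the ascending-constraint structure guaranteed by \textsc{LoadShift}, namely $\sum_{j=0}^{k-1} u_j \geq \Delta + \sum_{j=0}^{k-1}\mu_j$, which combined with the closed-form $x_k = \sum_{j=0}^{k-1}u_j - \sum_{j=0}^{k-2}\mu_j - w_{k-1}$ under $\bm{\pi}$ gives $x_k \geq \Delta + \mu_{k-1} - w_{k-1} \geq 0$. For the nonnegativity of $\pi_k$, each $u_k$ is an average of $\mu_j$'s (possibly with the first block's first entry bumped by $\Delta$), so Assumption~\ref{assumption:positive_demand} forces $u_k > \Delta$ and hence $\pi_k \geq u_k - \Delta > 0$. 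Once feasibility is established, the Taylor-based argument above completes the proof.
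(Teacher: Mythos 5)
Your proof is correct and follows essentially the same route as the paper's: it lower-bounds the myopic cost by two second-order Taylor expansions (around the per-period means and then around the grand mean $M$) and upper-bounds the optimal cost by the cost of the feasible load-shifting heuristic, bounded symmetrically with $L$. The only cosmetic difference is that you carry the $NG(M)$ term explicitly on both sides before cancelling, whereas the paper invokes its Theorem~\ref{thm:ub_heuristic} machinery and the chain $J_{\bm{\pi}^{\textup{my}}}-J_{\bm{\pi}^*}\geq J_{\bm{\pi}^{\textup{my}}}-J_{\bm{\pi}^{\textup{h}}}$; the substance is identical.
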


\begin{comment}
We highlight a scenario of particular interest in the following Corollary:

\begin{corollary}\label{cor:lb_myopic_quad}
    For any stochastic suction pressure control problem $P = (G, \bm{\mu}, \bm{\Sigma}, \Delta)$ where $G''(x) = l$, the difference in expected costs between the myopic policy and the optimal policy is at least
    \begin{equation*}
        J_{\bm{\pi}^{\textup{my}}}(0) - J_{\bm{\pi}^*}(0) \geq \frac{l}{2} (S_{\mu} - S_{u}) \geq 0.
    \end{equation*}
\end{corollary}
\end{comment}

\begin{proof}
    See Appendix \ref{sec:lb_myopic_proof}.
\end{proof}

Theorem \ref{thm:lb_myopic} explicitly quantifies how much one stands to lose by not utilizing load shifting. This bound depends on
\begin{itemize}
    \item the convexity of the cost function $G$,
    \item the `spread' between the mean demands, and
    \item the magnitude of the variance.
\end{itemize}
Thus, in settings where any of these effects are severe, one should consider implementing a more advanced load shifting strategy that takes future demands into account. Note that in the special case where $G$ is quadratic, the lower bound depends only on the difference in spreads between the nominal means and the result of the load shifting algorithm. In the following section, we present a simple heuristic as an alternative to the myopic policy that performs load shifting effectively without significant computational overhead.

\subsection{An Effective Heuristic}

The previous section establishes that load shifting can offer significant benefits, but solving for the optimal load shifting policy $\bm{\pi}^*$ can be difficult, since optimal policies often have little discernible structure (i.e., they cannot be uniquely characterized by specific `order-up-to' levels \cite{veinott1964production}). Thus, in this section, we present a simple heuristic as an alternative to $\bm{\pi}^*$ and characterize its expected costs.

\begin{algorithm}
\caption{Load shifting heuristic.}\label{alg:LSH}
\begin{algorithmic}
\Procedure{LSH($\bm{\mu}$, $\Delta$)}{}
\State $x'_0 \gets \mu_0 + \Delta$
\State $\bm{u}_{1:N-1} \gets \text{\textsc{LoadShift}}(\bm{\mu}_{1:N-1},0,0)$
\For{$k \in \{1,\dots,N-1\}$}
\State $x'_k \gets x'_{k-1} + u_k - \mu_{k-1}$
\EndFor
\State \Return $\bm{x}'$
\EndProcedure
\end{algorithmic}
\end{algorithm}

The load shifting heuristic we propose is shown in Algorithm~\ref{alg:LSH} and relies on the load shifting algorithm from the deterministic setting. 
% In particular, for a given $\bm{\mu}$ and $\Delta$, define the quantities
% \begin{align*}
%     u_0 &= \mu_0 + \Delta \\
%     u_{1:T-1} &=
%     \text{\textsc{LoadShift}}\left( \mu_{1 : T-1},0 \right) \\
%     x_0' &= u_0 \\
%     x_k' &= x_{t - 1}' + u_k - \mu_{t - 1} \qquad t \in \{1, \dots, T - 1 \}.
% \end{align*}
Here, we use the notation $\bm{\mu}_{1 : N-1}$ to refer to the subsequence $\{ \mu_1, \dots, \mu_{N - 1} \}$, and similarly for $\bm{u}_{1 : N-1}$. The heuristic policy is defined by
\begin{equation}
    \bm{\pi}_k^{\textup{h}} \triangleq \max \{0, x_k' - x_k \}
\end{equation}
for all $k$. The policy aims to track the behavior of the optimal load shifting algorithm in the deterministic setting. The following theorem bounds the difference between the expected costs of the heuristic and the optimal policy.

\begin{theorem}\label{thm:ub_heuristic}
    For any inventory system $P = (G, \bm{\mu}, \bm{\Sigma}, \Delta)$ that satisfies Assumptions \ref{assumption:convex_cost} and \ref{assumption:positive_demand} with $G''(x) \leq L$, the difference in expected costs between the heuristic and the optimal policy is at most
    \begin{align}
        J_{\bm{\pi}^{\textup{h}}}(0) - J_{\bm{\pi}^*}(0) &\leq \frac{L}{2}  \left( S_{u} + \sum_{k = 0}^{N - 2} \sigma_k^2 \right)
    \end{align}
    where
    \begin{gather*}
        M = \frac{\Delta + \sum_{k = 0}^{N - 1} \mu_k}{N}, \qquad \bm{u} = \text{\textsc{LoadShift}}(\bm{\mu},\Delta,0),
        \\
        S_{u} = \sum_{k = 0}^{N - 1} \left( u_k - M \right)^2.
    \end{gather*}
\end{theorem}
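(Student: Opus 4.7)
The plan is to bracket the heuristic's performance by upper-bounding $J_{\bm{\pi}^{\textup{h}}}(0)$ and lower-bounding $J_{\bm{\pi}^*}(0)$, then subtracting. The central analytic tool will be the quadratic upper bound implied by $G'' \leq L$: for all $y, z$ in the domain of $G$,
\[
G(y) \;\leq\; G(z) + G'(z)(y-z) + \tfrac{L}{2}(y-z)^2.
\]
I will apply this inequality twice---once with $z = u_k$ to absorb the demand variance at each step, and once with $z = M$ to re-expand around the mean and produce the $S_u$ term.

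To upper-bound $J_{\bm{\pi}^{\textup{h}}}(0)$, I trace the trajectory induced by $\bm{\pi}^{\textup{h}}$. Since $x_0 = 0$, period $0$ is deterministic, with $\pi_0^{\textup{h}}(0) = x'_0 = u_0$. For $k \geq 1$, when the $\max$ in the heuristic's definition is active (the generic regime), a direct induction on the state recursion yields $x_k = x'_{k-1} - w_{k-1}$, so $\pi_k^{\textup{h}}(x_k) = u_k + (w_{k-1} - \mu_{k-1})$, a random variable with mean $u_k$ and variance $\sigma_{k-1}^2$. Applying the quadratic bound with $z = u_k$ and taking expectations gives $\mathbb{E}[G(\pi_k^{\textup{h}}(x_k))] \leq G(u_k) + \tfrac{L}{2}\sigma_{k-1}^2$. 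Summing,
\[
J_{\bm{\pi}^{\textup{h}}}(0) \;\leq\; \sum_{k=0}^{N-1} G(u_k) + \frac{L}{2}\sum_{k=0}^{N-2}\sigma_k^2.
\]

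For the lower bound, feasibility at period $0$ (with $x_0 = 0$ and support-worst demand $\mu_0 + \Delta$) forces $\pi_0^*(0) \geq \mu_0 + \Delta$ deterministically, while the terminal constraint $x_N \geq 0$ taken in expectation yields $\mathbb{E}\bigl[\sum_{k=0}^{N-1}\pi_k^*\bigr] \geq \sum_k \mu_k$. Combining, $\mathbb{E}\bigl[\sum_k \pi_k^*\bigr] \geq \Delta + \sum_k \mu_k = NM$, and Jensen's inequality for the convex $G$ then delivers $J_{\bm{\pi}^*}(0) \geq N G(M)$.

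Subtracting produces $J_{\bm{\pi}^{\textup{h}}}(0) - J_{\bm{\pi}^*}(0) \leq \sum_{k=0}^{N-1} G(u_k) - N G(M) + \tfrac{L}{2}\sum_{k=0}^{N-2}\sigma_k^2$. Applying the quadratic bound a second time, now with $z = M$, to each $G(u_k)$: the linear $G'(M)$ contributions cancel because LoadShift preserves total demand, $\sum_k u_k = \Delta + \sum_k \mu_k = NM$, so $\sum_k (u_k - M) = 0$; the quadratic residuals then assemble exactly into $\tfrac{L}{2} S_u$, yielding the theorem's bound. The main obstacle I anticipate is rigorously handling the periods in which the $\max$ in $\pi_k^{\textup{h}}$ clamps the order to zero---this occurs when a realized demand substantially undershoots its mean and leaves the state above its target, so the idealized identity $x_k = x'_{k-1} - w_{k-1}$ fails. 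However, since $G(0) = 0$ and $G$ is convex and increasing, the clamped instantaneous cost is dominated by the ``shadow'' cost that the unclamped order would incur, so a sample-path coupling argument should preserve the upper bound on $J_{\bm{\pi}^{\textup{h}}}(0)$ without further loss.
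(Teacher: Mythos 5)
Your overall architecture matches the paper's proof: a quadratic bound from $G''\le L$ applied once about $u_k$ to absorb the one-step demand variance and once about $M$ to produce $S_u$ (with the linear terms cancelling because \textsc{LoadShift} preserves the total $NM$), combined with the lower bound $J_{\bm{\pi}^*}(0)\ge N G(M)$ from feasibility plus Jensen. Two steps, however, have genuine gaps.

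First, the ``combining'' step in your lower bound is a non sequitur. From $\pi_0^*\ge\mu_0+\Delta$ pathwise and $\sum_k\pi_k^*\ge\sum_k w_k$ pathwise (hence $\mathbb{E}\bigl[\sum_k\pi_k^*\bigr]\ge\sum_k\mu_k$), you cannot add the $\Delta$: the second inequality already accounts for period $0$, and a policy could in principle spend the surplus $\pi_0^*-w_0$ to reduce later orders. Indeed, writing $x_N=x_1+\sum_{k\ge1}(\pi_k^*-w_k)\ge 0$ and taking expectations, the $\pi_0^*$ contribution cancels and you recover only $\mathbb{E}\bigl[\sum_k\pi_k^*\bigr]\ge\sum_k\mu_k$, which falls short of $NM$ by $\Delta$. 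The paper instead argues at the level of realized \emph{costs}: every feasible policy pays at least $G(\mu_0+\Delta)$ in period $0$ (the order is placed before $w_0$ is seen) and at least $\sum_{k\ge1}G(w_k)$ thereafter, and two applications of Jensen's inequality turn this into $NG(M)$. You need an accounting of that type---one that prevents the initial $\Delta$ of forced over-ordering from being refunded in expectation.

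Second, the clamping issue you flag is real, but your proposed fix would not work: if the $\max$ ever clamps an order to zero, the realized state departs from $x'_{k-1}-w_{k-1}$, so all \emph{subsequent} orders change, and a claim that the clamped instantaneous cost is dominated by a ``shadow'' cost does not control the downstream trajectory. The paper sidesteps the coupling entirely by proving the clamp never activates: for $k\ge1$,
\begin{equation*}
x'_k-x_k=u_k-\mu_{k-1}+w_{k-1}\ \ge\ u_k-\Delta\ >\ 0,
\end{equation*}
since each $u_k$ returned by \textsc{LoadShift} is an average of consecutive means $\mu_j$, each of which exceeds $\Delta$ by Assumption 2, and $w_{k-1}\ge\mu_{k-1}-\Delta$ by the support assumption. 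Replacing your coupling sketch with this short induction makes the identity $\pi_k^{\textup{h}}(x_k)=u_k+(w_{k-1}-\mu_{k-1})$ hold on every sample path, exactly as your variance computation requires.
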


\begin{proof}
    See Appendix \ref{sec:ub_heuristic_proof}.
\end{proof}

Theorem \ref{thm:ub_heuristic} establishes that the suboptimality of the heuristic, similar to that of the myopic policy, depends on the convexity of the function, the variance of the demand, and the spread of the mean demands. However, notice that the suboptimality in this setting depends on the spread between the output of the load shifting algorithm, and not the nominal means themselves, which can be significantly larger.

\section{Simulations}

% \subsection{Inventory Control}

\begin{figure*}
\centering
\begin{comment}
\begin{subfigure}{.32\textwidth}
  \centering
  \includegraphics[width=\linewidth]{}
  \caption{The trajectory of the support of the stochastic heat load.}
  \label{fig:sub1}
\end{subfigure}%
\begin{subfigure}{.32\textwidth}
  \centering
  \includegraphics[width=\linewidth]{}
  \caption{A cost comparison of the two policies in the deterministic setting.}
  \label{fig:sub2}
\end{subfigure}
\begin{subfigure}{.32\textwidth}
  \centering
  \includegraphics[width=\linewidth]{}
  \caption{A cost comparison of the four policies in the stochastic setting.}
  \label{fig:sub4}
\end{subfigure}
\end{comment}
\includegraphics[width=\linewidth]{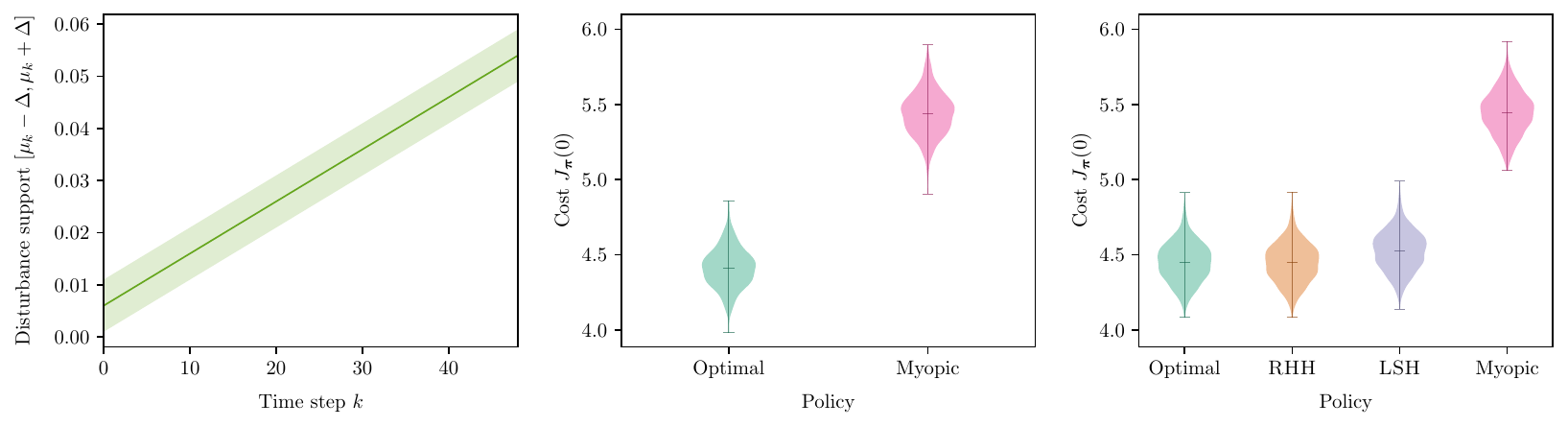}
\caption{Cost comparisons of various policies in deterministic and stochastic settings over 1,000 simulations. Left: the trajectory of the demand support; we consider a discrete uniform probability distribution at every time step. Center: the performance of the optimal and myopic policies in the deterministic setting; the \textit{known} demand trajectories $\bm{w}$ are generated by sampling the distribution (left) a priori. Right: the performance of the optimal policy, both heuristics, and the myopic policy in the stochastic setting. The known distributions (left) are used to compute the optimal policy via dynamic programming. All four policies are then evaluated via Monte Carlo simulation. The simulations use a state space $x \in [0,1]$, discretization $dx = 0.001$, horizon $N = 50$, cost function $G(u_k) = 100 u_k^2$, and $w_k$ discretized like the state.}
\label{fig:sims}
\end{figure*}

Figure~\ref{fig:sims} shows simulation results for the deterministic setting of Section~\ref{sect:deterministic} and the stochastic setting of Section~\ref{sect:stochastic}. In the deterministic setting, the optimal suction pressure trajectory from Algorithm~\ref{alg:load_shift} leads to an average cost reduction of $18.9$\% compared to the myopic policy. In the stochastic setting, this cost reduction is $18.3$\%. For this problem, our theoretical lower bound for the average performance gap $J_{\bm{\pi}^{\textup{my}}}(0) - J_{\bm{\pi}^*}(0)$ and the observed average gap are $0.999$ and $0.976$, respectively. For the load shifting heuristic we observe a $16.9$\% reduction in average cost compared to the myopic policy. Our theoretical upper bound for the average performance gap $J_{\bm{\pi}^{\textup{h}}}(0) - J_{\bm{\pi}^*}(0)$ and the observed average gap are $0.087$ and $0.077$, respectively. Note that the theoretical bounds for the myopic and heuristic policies are relatively close to the observed gaps.

As shown in Figure~\ref{fig:sims}, we also propose the receding horizon heuristic shown in Algorithm~\ref{alg:RHH} that achieves excellent empirical performance. At each time step, the algorithm is re-executed and only the first control action is implemented. 

\begin{algorithm}
\caption{Receding horizon heuristic.}\label{alg:RHH}
\begin{algorithmic}
\Procedure{RHH($\bm{\mu}_{k:N-1}$, $\Delta$, $x_k$)}{}
\State $\bm{u}_{k:N-1} \gets \text{\textsc{LoadShift}}(\bm{\mu}_{k:N-1},\Delta,x_k)$
\State \Return $u_k$
\EndProcedure
\end{algorithmic}
\end{algorithm}

% \subsection{Industrial Refrigeration Case Study}

\begin{figure}
    \centering
    \includegraphics[width=0.97\linewidth]{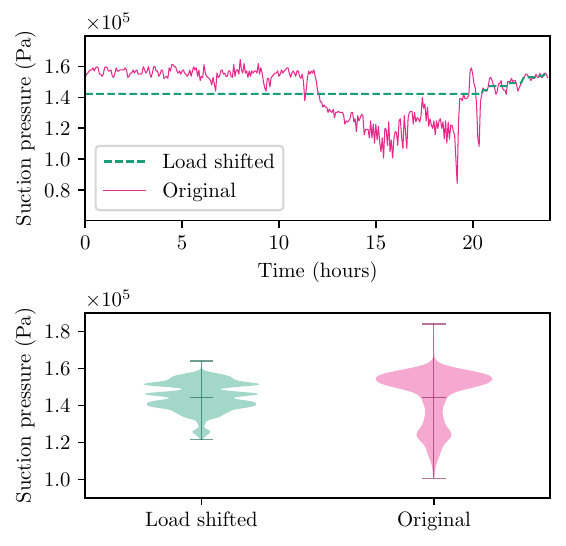}
    \caption{A case study in suction pressure control using real data from February 2025 at an industrial refrigeration facility in Salem, CT. The month of data is broken up into 24-hour windows. Top: original and optimized (from Algorithm~\ref{alg:load_shift}) suction pressure trajectories over one sample window. Bottom: violin plot of suction pressure data and corresponding optimized values over all windows. Note the reduction in variability after load shifting.}
\label{fig:case_study}
\end{figure}

\begin{comment}
\begin{figure*}
\centering
\begin{subfigure}{.48\textwidth}
  \centering
  \includegraphics[width=\linewidth]{acc_figs/stochastic_sim_disturbance.png}
  \caption{Caption.}
  \label{fig:sub1}
\end{subfigure}%
\begin{subfigure}{.48\textwidth}
  \centering
  \includegraphics[width=\linewidth]{acc_figs/stochastic_sim_boxplot.png}
  \caption{Caption.}
  \label{fig:sub2}
\end{subfigure}
\caption{These plots show a case study in suction pressure control using real data from February 2025 at an industrial refrigeration facility in Salem, CT. The data is broken up into 24 hour windows, several of which are shown in (a) along with the optimized suction pressure trajectories from Algorithm~\ref{alg:load_shift}. (b) shows box plots comparing the suction pressure values seen in the real data and the corresponding optimized values.}
\label{fig:case_study}
\end{figure*}
\end{comment}

Returning to our motivation of suction pressure control in industrial refrigeration, we also examine a month of real suction pressure data from a facility in Salem, CT as a case study. To analyze the data, we split it into 24-hour windows and compute the heat absorption $u_k = H\left(P^s_k\right)$ at each time step $k$ from the original suction pressure trajectory $\bm{P}^s$. We compare that heat absorption trajectory $\bm{u}$ with the hindsight-optimal deterministic solution $\bm{u}^*$ to~\eqref{eq:SPO} from Algorithm~\ref{alg:load_shift}. For a fair comparison, we choose the heat load trajectory $\bm{w} = \bm{u}$ to ensure that the optimized trajectory $\bm{u}^*$ must perform the same cumulative heat absorption. 
% Our results indicate a 1\% decrease in average cost over the 24-hour windows. 
Fig.~\ref{fig:case_study} shows an example window of real suction pressure data and the corresponding optimized trajectory; observe that the optimized profile maintains a relatively consistent level throughout the day. Fig.~\ref{fig:case_study} also shows that over the course of a month, the optimized suction pressure trajectories have meaningfully less variance, indicating potential cost savings. 

\section{Conclusion}

This paper draws a connection between industrial refrigeration systems and inventory control problems characterized by convex ordering costs and non-negativity constraints. By recognizing the convex relationship between compressor work and heat removal, we develop both optimal algorithms for deterministic settings and practical heuristics with provable performance guarantees for stochastic settings. Our theoretical analysis quantifies the value of load shifting strategies, showing that the benefits depend on the degree of cost convexity and demand variability. The numerical simulations and the case study using real industrial refrigeration data validate these insights, indicating a potential for cost savings through optimized suction pressure control. Future directions include incorporating additional factors in our model such as time-varying discharge pressure and electricity prices.

\appendix

\subsection{Expected Cost of Myopic Policy}

In period $0$, the myopic policy must meet the constraint
\begin{align*}
    u_0 \geq \max\{ 0, \mu_0 + \Delta - x_0 \} = \mu_0 + \Delta
\end{align*}
so it will simply order $u_0 = \mu_0 + \Delta \implies x_1 = \mu_0 + \Delta - w_0$. In the second period, it must meet the constraint
\begin{align*}
    u_1 &\geq \max\{ 0, \mu_1 + \Delta - x_1 \} \\
    &= \max\{ 0, \mu_1 + \Delta - (\mu_0 + \Delta - w_0) \} = \mu_1 - \mu_0 + w_0,
\end{align*}
so it will place an order for exactly this amount; the last line follows since $\mu_1 > \Delta \geq \mu_0 - w_0$. Thus, we have
\begin{align*}
    x_2 &= x_1 + u_1 - w_1 \\
    &= \mu_0 + \Delta - w_0 + \mu_1 - \mu_0 + w_0 - w_1 \\
    &= \mu_1 + \Delta - w_1
    \implies u_2 = \mu_2 - \mu_1 + w_1.
\end{align*}
It is readily verified that this reasoning extends to all $k$. Thus, the total ordering cost for any realization of demands is
\begin{equation*}
    G(\mu_0 + \Delta) + \sum_{k = 1}^{N - 1} G \left( \mu_k - w_{k - 1} - \mu_{k-1} \right).
\end{equation*}
Taking expectations yields that $J_{\bm{\pi}^{\textup{my}}} (0 \, | \, P)$ equals
\begin{equation}\label{eq:expected_costs_myopic}
    G(\mu_0 + \Delta) + \sum_{k = 1}^{N - 1} \mathbb{E} [G \left( \mu_k + w_{k - 1} - \mu_{k-1} \right)].
\end{equation}

\subsection{Expected Cost of Heuristic}

First, we show that if the heuristic orders $x_k' - x_k$ in each period, then the constraint $x_k \geq 0,\ \forall k$ is satisfied. The first-period constraint
\begin{equation*}
    x_1 = x_0 + u_0 - w_0 = \mu_0 + \Delta - w_0 \geq 0
\end{equation*}
is clearly satisfied, since $w_0 \leq \mu_0 + \Delta$. In all subsequent periods, observe that we can rewrite $x_k'$ as $\sum_{j = 0}^k u_j - \sum_{j = 0}^{k - 1} \mu_j$. Thus, assuming $x_k' - x_k > 0$, we have
\begin{equation*}
    x_{k + 1} = x_k + u_k - w_k = x_k' - w_k
\end{equation*}
which can be rewritten as
\begin{align*}
    \sum_{j = 0}^k u_j - \sum_{j = 0}^{k - 1} \mu_j - w_k
    &\geq \Delta + \sum_{j = 0}^k \mu_j - \sum_{j = 0}^{k - 1} \mu_j - w_k \\
    &= \mu_k + \Delta - w_k \geq 0,
\end{align*}
so the constraint is satisfied for all $k$. What remains to be shown is that the heuristic orders $x_k' - x_k$ in each period, meaning $x_k' - x_k \geq 0$ for all $k$. For $k = 0$, the heuristic orders $x_0' = \mu_0 + \Delta$, so $x_0' > x_0$. For $k = 1$, we have
\begin{equation*}
    x_1' - x_1 = (u_0 + u_1 - \mu_0) - (u_0 - w_0) = u_1 - \mu_0 + w_0.
\end{equation*}
By construction, $u_1$ is the average of $\mu_j, \dots, \mu_{j + i}$ for some $j$, $i$, and since $\mu_k - \Delta > 0$ for all $k$, we must also have that $u_1 - \mu_0 + w_0 > u_1 - \Delta > 0$. Thus, we have
\begin{align*}
    \max \{0, x_1' - x_1 \}& = x_1' - x_1 \\
    \implies u_1 &= x_1' - x_1 = u_1 - \mu_0 + w_0 \\
    \implies x_2 &= x_1 + u_1 - w_1 = x_1' - w_1.
\end{align*}
It is readily verified that for all $k \in \{2, \dots, N - 1\}$,
\begin{align*}
    x_k' - x_k &= u_k - \mu_{k - 1} + w_{k - 1} \geq 0 \\
    x_k &= x_{k - 1}' - w_{k - 1},
\end{align*}
so $x_k' - x_k$ is ordered for all $k \geq 1$ and all constraints are met. Thus, the total ordering costs are given by
\begin{equation*}
    G( \mu_0 + \Delta) + \sum_{k = 1}^{N - 1} G \left( u_k - \mu_{k - 1} + w_{k - 1} \right).
\end{equation*}
Taking expectations yields that $J_{\bm{\pi}^{\textup{h}}} (0 \, | \, P)$ equals
\begin{equation}\label{eq:expected_costs_heuristic}
    G(\mu_0 + \Delta) + \sum_{k = 1}^{N - 1} \mathbb{E} [G \left( u_k +  w_{k - 1} - \mu_{k-1} \right)].
\end{equation}

\subsection{Proof of Theorem \ref{thm:ub_heuristic}}\label{sec:ub_heuristic_proof}

The second order Taylor expansion of $G$ about $u_k$ is
\begin{align*}
    G(y) &= G(u_k) + G'(u_k)(y - u_k) + \frac{1}{2} G''(\zeta(y)) (y - u_k)^2 \\
    &\leq G(u_k) + G'(u_k)(y - u_k) + \frac{L}{2} (y - u_k)^2
\end{align*}
where $\zeta(y)$ lies between $y$ and $u_k$. Taking the expectation and evaluating at $y = u_k - \mu_{k - 1} +  w_{k - 1}$ yields
\begin{align*}
    \mathbb{E} \bigg[ &G(u_k) + G'(u_k) (w_{k - 1} - \mu_{k-1}) + \frac{L}{2} (w_{k - 1} - \mu_{k-1})^2 \bigg] \\
    = \, &G(u_k) + \frac{L}{2} \text{Var}(w_{k - 1}).
\end{align*}
Substituting this expression into \eqref{eq:expected_costs_heuristic}, the expected costs under the heuristic policy can be upper bounded by 
\begin{equation*}
    J_{\bm{\pi}^{\textup{h}}}(0) \leq \sum_{k = 0}^{N - 1} G (u_k) + \frac{L}{2} \sum_{k = 0}^{N - 2} \sigma_k^2,
\end{equation*}
where $\sigma_k^2 = \text{Var}(w_{k})$.
We take another second-order Taylor expansion about $M$ and evaluate it at $u_k$, which yields
\begin{align*}
    \sum_{k = 0}^{N - 1} G (u_k) = &\sum_{k = 0}^{N-1} \Big( G\left( M \right) + G'(M) (u_k - M) \\ + \, &\frac{1}{2} G''(\eta(u_k))(u_k - M)^2 \Big) \leq N G(M) + \frac{L}{2} S_u.
\end{align*}
Here, observe that the first order terms vanish because $\sum_{k = 0}^{N - 1} u_k = \sum_{k = 0}^{N - 1} \mu_k + \Delta = N M$. For the optimal policy, observe that in period $k = 0$, the optimal policy must order at least $\mu_0 + \Delta$ units, and across all subsequent periods, it must order at least $\sum_{k = 1}^{N - 1} w_k$ units. Thus, for any realization, the cost of the optimal policy is lower bounded by 
\begin{align*}
    G(\mu_0 + \Delta) + \sum_{k = 1}^{N - 1} G(w_k) \geq N G \left( \frac{\mu_0 + \Delta + \sum_{k = 1}^{N - 1} w_k}{N} \right).
\end{align*}
Taking expectations yields
\begin{align*}
    J_{\bm{\pi}^*}(0) &\geq N \mathbb{E} \left[ G \left( \frac{\mu_0 + \Delta + \sum_{k = 1}^{N - 1} w_k}{N} \right) \right] \\
    &\geq N G \left( \mathbb{E} \left[ \frac{\mu_0 + \Delta + \sum_{k = 1}^{N - 1} w_k}{N} \right] \right) = N G(M).
\end{align*}
Subtracting $N G(M)$ from the upper bound on the heuristic policy yields the upper bound
$\frac{L}{2} \left( S_{u} + \sum_{k = 0}^{N - 2} \sigma_k^2 \right)$
on the difference $J_{\bm{\pi}^{\textup{h}}}(0) - J_{\bm{\pi}^*}(0)$.

\subsection{Proof of Theorem \ref{thm:lb_myopic}}\label{sec:lb_myopic_proof}

Since \eqref{eq:expected_costs_myopic} and \eqref{eq:expected_costs_heuristic} are similar, the proof of Theorem \ref{thm:lb_myopic} is similar to that of Theorem \ref{thm:ub_heuristic}. Notice that the bound
\begin{equation*}
    J_{\bm{\pi}^{\textup{my}}}(0) \geq \frac{l}{2} \left( S_{\mu} + \sum_{k = 0}^{N - 2} \sigma_k^2 \right)
\end{equation*}
can be derived in a near identical manner to the upper bound on $J_{\bm{\pi}^{\textup{h}}}(0)$ for Theorem \ref{thm:lb_myopic} with $l$ in place of $L$ and $S_\mu$ in place of $S_u$. Then, since $J_{\bm{\pi}^{\textup{h}}}(0) \geq J_{\bm{\pi}^{\textup{*}}}(0)$, we have
\begin{gather*}
    J_{\bm{\pi}^{\textup{my}}}(0) - J_{\bm{\pi}^*}(0) \geq \, J_{\bm{\pi}^{\textup{my}}}(0) - J_{\bm{\pi}^{\textup{h}}}(0),
    \intertext{which is lower bounded by}
    \frac{l}{2} \left( S_{\mu} + \sum_{k = 0}^{N - 2} \sigma_k^2 \right)
    - \, \frac{L}{2} \left( S_{u} + \sum_{k = 0}^{N - 2} \sigma_k^2 \right),
\end{gather*}
yielding the desired result.

\bibliographystyle{plain}
\bibliography{ref}

\begin{thebibliography}{10}

\bibitem{EIA_MECS}
U.S. Energy~Information Administration.
\newblock {Manufacturing Energy Consumption Survey (MECS)}.
\newblock {Technical Report/Survey}, U.S. Department of Energy, 2022.

\bibitem{arrow1951optimal}
Kenneth~J Arrow, Theodore Harris, and Jacob Marschak.
\newblock {Optimal Inventory Policy}.
\newblock {\em Econometrica: Journal of the Econometric Society}, pages 250--272, 1951.

\bibitem{CoolProp}
Ian~H. Bell, Jorrit Wronski, Sylvain Quoilin, and Vincent Lemort.
\newblock {Pure and Pseudo-pure Fluid Thermophysical Property Evaluation and the Open-Source Thermophysical Property Library CoolProp}.
\newblock {\em Industrial \& Engineering Chemistry Research}, 53(6):2498--2508, 2014.

\bibitem{benjaafar2018optimal}
Saif Benjaafar, David Chen, and Yimin Yu.
\newblock Optimal policies for inventory systems with concave ordering costs.
\newblock {\em Naval Research Logistics (NRL)}, 65(4):291--302, 2018.

\bibitem{DAmico2014}
Antonio~A. D’Amico, Luca Sanguinetti, and Daniel~P. Palomar.
\newblock {Convex Separable Problems With Linear Constraints in Signal Processing and Communications}.
\newblock {\em IEEE Transactions on Signal Processing}, 62(22):6045–6058, November 2014.

\bibitem{Hochbaum1990}
D.~S. Hochbaum and J.~George Shanthikumar.
\newblock {Convex Separable Optimization Is Not Much Harder Than Linear Optimization}.
\newblock {\em Journal of the ACM}, 37(4):843–862, October 1990.

\bibitem{Hovgaard2013}
Tobias~Gybel Hovgaard, Stephen Boyd, Lars~F.S. Larsen, and John~Bagterp Jørgensen.
\newblock Nonconvex model predictive control for commercial refrigeration.
\newblock {\em International Journal of Control}, 86(8):1349–1366, August 2013.

\bibitem{Jain2011}
Neera Jain and Andrew~G. Alleyne.
\newblock Thermodynamics-based optimization and control of vapor-compression cycle operation: Optimization criteria.
\newblock In {\em Proceedings of the 2011 American Control Conference}, page 1352–1357. IEEE, June 2011.

\bibitem{John2025}
Yohan John, Vade Shah, James~A. Preiss, Mahnoosh Alizadeh, and Jason~R. Marden.
\newblock {The Price of Simplicity: Analyzing Decoupled Policies for Multi-Location Inventory Control}, 2025.

\bibitem{Koeln2014}
Justin~P. Koeln and Andrew~G. Alleyne.
\newblock Optimal subcooling in vapor compression systems via extremum seeking control: Theory and experiments.
\newblock {\em International Journal of Refrigeration}, 43:14–25, July 2014.

\bibitem{Konda2024}
Rohit Konda, Vikas Chandan, Jesse Crossno, Blake Pollard, Dan Walsh, Rick Bohonek, and Jason~R. Marden.
\newblock {Load Shifting for Compressor Sequencing in Industrial Refrigeration}.
\newblock In {\em 2024 American Control Conference (ACC)}, page 3505–3510. IEEE, July 2024.

\bibitem{Larsen2006}
Lars F.~S. Larsen.
\newblock {Model Based Control of Refrigeration Systems}, 2006.

\bibitem{Morton1985}
G.~Morton, R.~von Randow, and K.~Ringwald.
\newblock A greedy algorithm for solving a class of convex programming problems and its connection with polymatroid theory.
\newblock {\em Mathematical Programming}, 32(2):238–241, June 1985.

\bibitem{Padakandla2010}
Arun Padakandla and Rajesh Sundaresan.
\newblock {Separable Convex Optimization Problems with Linear Ascending Constraints}.
\newblock {\em SIAM Journal on Optimization}, 20(3):1185–1204, January 2010.

\bibitem{perera2023survey}
Sandun~C Perera and Suresh~P Sethi.
\newblock {A survey of stochastic inventory models with fixed costs: Optimality of $(s, S)$ and $(s, S)$-type policies—Discrete-time case}.
\newblock {\em Production and Operations Management}, 32(1):131--153, 2023.

\bibitem{scarf1960optimality}
Herbert Scarf.
\newblock {The Optimality of $(S, s)$ Policies in the Dynamic Inventory Problem}.
\newblock {\em Mathematical Methods in the Social Sciences}, 1960.

\bibitem{Stoecker1998}
Wilbert~F Stoecker.
\newblock {\em Industrial Refrigeration Handbook}.
\newblock McGraw-Hill Professional, New York, NY, January 1998.

\bibitem{veinott1964production}
Arthur~F Veinott~Jr.
\newblock {Production Planning with Convex Costs: A Parametric Study}.
\newblock {\em Management Science}, 10(3):441--460, 1964.

\bibitem{Vidal2019}
Thibaut Vidal, Daniel Gribel, and Patrick Jaillet.
\newblock {Separable Convex Optimization with Nested Lower and Upper Constraints}.
\newblock {\em INFORMS Journal on Optimization}, 1(1):71–90, January 2019.

\bibitem{Viswanath2002}
P.~Viswanath and V.~Anantharam.
\newblock {Optimal sequences for CDMA under colored noise: a Schur-saddle function property}.
\newblock {\em IEEE Transactions on Information Theory}, 48(6):1295–1318, June 2002.

\end{thebibliography}

\end{document}